\newtheorem{teo}{Theorem}
\newtheorem{cor}{Corollary}
\theoremstyle{remark}
\newtheorem{rem}{Remark}
\title{Generalized Bessel functions of dihedral-type: expression as a series of confluent Horn functions and Laplace-type integral representation}
\date{Septembre 2018}
\begin{document}

\author[Luc Deleaval]{L. Deleaval}
\address{Laboratoire d'Analyse et de Math\'ematiques appliqu\'ees \\ Universit\'e Paris-Est Marne-la-Vall\'ee \\
France}
\email{luc.deleaval@u-pem.fr}

\author[N. Demni]{N. Demni}
\address{IRMAR, Universit\'e de Rennes 1\\ Campus de
Beaulieu\\ 35042 Rennes cedex\\ France}
\email{nizar.demni@univ-rennes1.fr}
\subjclass[2010]{33C45; 33C52; 33C65; 44A20}
\keywords{Generalized Bessel function; Dihedral groups; Confluent Horn functions; Laplace-type integral representation.}
\maketitle
\begin{abstract}
In the first part of this paper, we express the generalized Bessel function associated with dihedral systems and a constant multiplicity function as a infinite series of confluent Horn functions. The key ingredient leading to this expression is an extension of an identity involving Gegenbauer polynomials proved in a previous paper by the authors, together with the use of the Poisson kernel for these polynomials. In particular, we derive an integral representation of this generalized Bessel function over the standard simplex. The second part of this paper is concerned with even dihedral systems and boundary values of one of the variables. Still assuming that the multiplicity function is constant, we obtain a Laplace-type integral representation of the corresponding generalized Bessel function, which extends to all even dihedral systems a special instance of the  Laplace-type integral representation proved in \cite{Amr-Dem}.  
\end{abstract}

\section{Introduction}

Generalized Bessel functions associated with dihedral groups have received considerable attention in recent times. This is mainly due to their occurence in several branches of mathematics, such as harmonic analysis or representation theory. Indeed, beyond their most natural setting of being the symmetric counterpart of the so-called Dunkl kernel, they turn out to be surprisingly connected to the Laguerre semi-group constructed in  \cite{BKO} and in the particular case of the square-preserving dihedral group, to certain representations of the indefinite orthogonal group of rank two \cite{Kob-Man}. Besides, special instances of them are Laplace transforms of Duistermaat-Heckman measures which were introduced in \cite{BBO} for finite Coxeter groups by means of generalized Pitman transforms and given there a very interesting probabilistic interpretation.

One of the challenging problems concerning generalized Bessel functions associated with dihedral groups is to find relatively simple formulas in terms of well-known special functions, and to potentially obtain integral representations of Laplace-type for them. Results towards this goal have been recently obtained in a series of papers (\cite{Amr-Dem,CDBL,DDY,Del-Dem,Dem2,Xu}). In particular, the identity recalled below in \eqref{IdGeg} and proved in \cite{Del-Dem} shows that, if one of the two variables of the generalized Bessel function lies on the boundary of the dihedral wedge and if the multiplicity function is constant, then it may be expressed through the confluent Horn function $\Phi_2$. 

In this paper, we shall pursue this line of research and prove, only assuming that the multiplicity function is constant, that the generalized Bessel function is given by a infinite series of confluent Horn functions. Our main tool is an extension of the aforementioned identity to the case of two Gegenbauer polynomials, and its proof appeals to their Poisson kernel. As a by-product, we obtain an integral representation over the standard simplex in an Euclidean space whose dimension is half of the order of the underlying dihedral group. 
Assuming further that the latter is even and that one of the variables lies on the boundary of the dihedral wedge, we shall derive a Laplace-type integral representation of the generalized Bessel function involving the standard simplex of an Euclidean space of smaller dimension (one quarter of the order of the underlying dihedral group). Up to our best knowledge, this kind of integral representations is only derived for the root systems of type $A$ (\cite{Amr}) or $B_2$ (\cite{Amr-Dem}), the latter being a particular dihedral root system. In this respect, it is worth noting that Theorem 1 in \cite{Amr-Dem} motivates the extension of our Laplace-type integral representation to all even dihedral groups without any assumption neither on the multiplicity function nor on the locations of the variables. 

The paper is organized as follows. In the next section, we recall some basic facts on dihedral groups and some definitions of special functions we will need later on. In section three, we express the generalized Bessel function associated with a given dihedral group
 and a constant multiplicity function as a series of confluent Horn functions, from which we deduce an integral representation for it over a simplex. In the last section, we prove a Laplace-type integral representation for the generalized Bessel function associated with even dihedral group  and assuming in addition that one of its variables lies on the boundary of the dihedral wedge.

\section{Background and notations} 
In this section, we recall the expression of the generalized Bessel functions of dihedral-type and introduce some special functions occuring in the sequel. For the interested reader, we refer to \cite{Dun-Xu} for a good account on general root systems and Dunkl operators to which generalized Bessel functions are canonically associated and to \cite{AAR} for the definitions and the properties of one-variable special functions occurring below.  

The dihedral group $\mathcal{D}_2(n), n \geq 3$, consists of orthogonal transformations leaving invariant a regular $n$-sided polygon centered at the origin. 
As a finite reflection group, $\mathcal{D}_2(n)$ corresponds to the dihedral root system
\begin{equation*}
\mathcal I_2(n) := \{\pm i \mathrm{e}^{i\pi l/n},\, 1 \leq l \leq n\}. 
\end{equation*}  
Choosing the positive root system
\begin{equation*}
\{- i \mathrm{e}^{i\pi l/n},\, 1 \leq l \leq n\}, 
\end{equation*}  
then the simple system consists of the vectors $\{\alpha_1 :=i, \alpha_2 := -i\mathrm{e}^{i\pi/n}\}$ so that the positive Weyl chamber $C$ is the dihedral wedge
\begin{equation*}
C := \bigl\{(r, \theta), \, r > 0, \, 0 < \theta < \pi/n\bigr\}.  
\end{equation*}
The dihedral group acts on its root system in a natural way and this action admits two orbits when $n := 2p, p\geq 2,$ (the roots forming the diagonals and those forming the lines joining the midpoints of the polygon) while there is only one orbit when $n$ is odd. Consequently, a multiplicity function $k$ (a function which is constant on each conjugate class) on $\mathcal I_2(n)$ takes two values $(k_0,k_1)$ if $n$ is even and only one value, again denoted by $k$, otherwise. For our later purposes, we only consider 
nonnegative multiplicity values though the formulas below extend to larger domains in the complex plane. 

Let $n=2p, p \geq 2$, then the generalized Bessel functions associated with the even dihedral group $\mathcal{D}_2(n)$ and the multiplicity function $(k_0,k_1)$, which we shall denote by  $D_k^{\mathcal{D}_2(n)}$, admits the following expansion in polar coordinates (\cite{Dem0})
\begin{equation*}
D_k^{\mathcal{D}_2(n)}(x,y) = c_{n,k_0,k_1}\biggl(\frac{2}{r\rho}\biggr)^{\gamma} \sum_{j \geq 0}{\it I}_{2jp +\gamma}(\rho r)p_j^{(l_1, l_0)}\bigl(\cos(2p\phi)\bigr)p_j^{(l_1, l_0)}\bigl(\cos(2p\theta)\bigr),
\end{equation*} 
where $x = \rho e^{i\phi}, y = re^{i\theta}$ belong to $\overline{C}$ and
\begin{itemize}
\item $\displaystyle{\gamma = p(k_0+k_1),}$
\item $\displaystyle{p_j^{(l_1, l_0)}}$ is the $j$-th orthonormal Jacobi polynomial of parameters \[ l_i = k_i - (1/2), i \in \{0,1\},\] 
\item ${\it I}_{\nu}$ is the modified Bessel function 
of the first kind
\begin{equation}\label{Bessel}
{\it I}_{\nu}(v) = \sum_{m \geq 0} \frac{1}{m!\Gamma(\nu+m+1)} \biggl(\frac{v}{2}\biggr)^{2m + \nu},
\end{equation}
\item $c_{n,k_0,k_1}$ is a normalizing constant subject to $\displaystyle{D^{\mathcal{D}_2(n)}_k(0,y) = 1}$ for all $y \in \overline{C}$. 
\end{itemize} 
Note in passing that $D_k^{\mathcal{D}_2(n)}$ is $\mathcal{D}_2(n)$-invariant and this invariance manifests itself by the presence of $\cos(2p\theta)$ in the argument of Jacobi polynomials. In the particular case $k_0= k_1 = k$, we can rewrite $D_k^{\mathcal{D}_2(n)}$ by means of non orthonormal Gegenbauer polynomials $C_j^{(k)}$ as:
\begin{equation}\label{Eq1}
D_k^{\mathcal{D}_2(n)}(x,y) = \frac{c_{n,k}}{B(k+1/2,1/2)}\biggl(\frac{2}{r\rho}\biggr)^{\gamma} \sum_{j \geq 0}{\it I}_{2jp +\gamma}(\rho r)(j+k) \frac{C_j^{(k)}\bigl(\cos(2p\phi)\bigr)C_j^{(k)}\bigl(\cos(2p\theta)\bigr)}{C_j^{(k)}(1)},
\end{equation} 
where 
\begin{equation*}
C_j^{(k)}(1) = \frac{(2k)_j}{j!}, \quad c_{n,k}:=c_{n,k,k},  
\end{equation*}
and 
\begin{equation*}
B(u,v) = \frac{\Gamma(u)\Gamma(v)}{\Gamma(u+v)}
\end{equation*}
is the Beta function. For odd dihedral groups, the corresponding generalized Bessel function is also given by \eqref{Eq1} where we should identify $2p$ with $n$ so that $\gamma = nk$  \footnote{The second formula displayed in \cite{Dem0}, Corollary 1, is erroneous.}. This coincidence allows to unify both cases (dihedral groups with equal multiplicity values and odd dihedral groups) which is the main focus of the paper. 

In the next section, we shall derive an expansion of $D_k^{\mathcal{D}_2(n)}$ as a series of confluent Horn functions (see for instance \cite[chapter V]{Erd1}, \cite{Kar-Sri}):
\begin{equation}\label{Horn}
\Phi_{2}^{(n)}(\beta_1, \ldots, \beta_{n}; \gamma; z_1, \dots, z_{n}) := \sum_{j_1, \ldots, j_{n} \geq 0} \frac{(\beta_1)_{j_1}\ldots (\beta_{n})_{j_{n}}}{(\gamma)_{j_1+\cdots+j_{n}}} \frac{z_1^{j_1}}{j_1!}\cdots  \frac{z_{n}^{j_{n}}}{j_{n}!},
\end{equation}
where $(\cdot)_j$ is the so-called Pochhammer symbol.
The occurrence of this function is motivated by the special boundary value $\phi = 0$ (a similar statement holds for $\theta = \pi/n$) for which the series displayed in the right-hand side of \eqref{Eq1} reduces to 
\begin{align*}
\sum_{j \geq 0}{\it I}_{jn +\gamma}(\rho r)(j+k)C_j^{(k)}\bigl(\cos(n\theta)\bigr),
\end{align*}
which is equal, up to a factor depending only on $n$ and $k$, to (\cite{CDBL,Del-Dem})
\begin{equation}\label{Horn1}
\Phi_{2}^{(n)}\Biggl(\underbrace{k, \dots, k}_{n \mathrm{\ times}}; nk; \rho r \cos(\theta), \rho r \cos\biggl(\theta + \frac{2\pi}{n}\biggr), \dots, \rho r \cos\biggl(\theta + \frac{2\pi(n-1)}{n}\biggr)\Biggr).
\end{equation}
Actually, we shall see that \eqref{Horn1} is the lowest-order term of the series displayed in Corollary \ref{coco}. 

\section{Generalized Bessel function as a series of confluent Horn functions} 
Using the expansion \eqref{Bessel} of the modified Bessel function, the right-hand side  of \eqref{Eq1} may be written up to the factor $\displaystyle{\frac{c_{n,k}}{nB(k+1/2,1/2)}}$ as a double series
\begin{equation*}
\sum_{j,m \geq 0}\frac{n(j+k)}{m!\Gamma(nj+nk+m+1)} \frac{C_j^{(k)}\bigl(\cos(n\phi)\bigr)C_j^{(k)}\bigl(\cos(n\theta)\bigr)}{C_j^{(k)}(1)}\biggl(\frac{\rho r}{2}\biggr)^{2m + nj}
\end{equation*}
which converges absolutely since 
\begin{equation*}
\bigl|C_j^{(k)}(z)\bigr| \leq C_j^{(k)}(1) = \frac{(2k)_j}{j!}, \quad |z| \leq 1. 
\end{equation*}
Hence, we will focus on the following finite sums
\begin{equation*}
S_N(n,k, \phi, \theta) := \sum_{\substack{j,m\geq0\\ N = 2m+nj}} \frac{n(j+k)}{m!\Gamma\bigl(n(j+k) +m+ 1\bigr)} \frac{C_j^{(k)}\bigl(\cos(n\phi)\bigr)C_j^{(k)}\bigl(\cos(n\theta)\bigr)}{C_j^{(k)}(1)}, \qquad N \geq 0.
\end{equation*}
If $\phi = 0$, then Proposition 1 in \cite{Del-Dem} asserts that 
\begin{equation}\label{IdGeg}
S_N(n,k, 0, \theta) = \frac{2^N}{\Gamma(nk+N)}\sum_{\substack{j_1, \ldots, j_n \geq 0 \\ j_1+\cdots +j_n = N}}  
(k)_{j_1}\ldots (k)_{j_{n}} \frac{\bigl({b^{\theta,n}_1}\bigr)^{j_1}}{j_1!}\cdots  \frac{\bigl({b^{\theta,n}_n}\bigr)^{j_1}}{j_{n}!},
\end{equation}
where we have set
\begin{equation*}
b^{\theta,n}_s : = \cos\biggl(\theta + \frac{2\pi s}{n}\biggr), \quad s = 1, \ldots, n. 
\end{equation*}
More generally, we shall prove the following formula.
\begin{teo} \label{teooo}
Let $N \geq 0$. Then, we have the equality 
\begin{multline*}
S_N(n,k, \phi, \theta) = \frac{2^N}{\Gamma(nk+N)}\times \\ \sum_{\substack{m_1, \dots, m_n \geq 0 \\ m_1+\dots +m_n = N}} 
\sum_{j=0}^{\inf(m_1,\dots, m_n)}\frac{(k)_j(k)_j}{(2k)_j j!} \Bigl(-2^{2-n}\sin(n\theta) \sin(n\phi)\Bigr)^j \prod_{s=1}^n \frac{(k+j)_{m_s-j}}{(m_s-j)!}\bigl(b^{\theta-\phi,n}_s\bigr)^{m_s-j}.
\end{multline*}
\end{teo}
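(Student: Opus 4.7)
The plan is to reduce the general case to the boundary case $\phi=0$, already handled by \eqref{IdGeg}, via a product identity for Gegenbauer polynomials. The crucial intermediate step, which extends the identity of \cite{Del-Dem}, is the expansion
\[
\frac{C_j^{(k)}\bigl(\cos(n\phi)\bigr)\,C_j^{(k)}\bigl(\cos(n\theta)\bigr)}{C_j^{(k)}(1)} = \sum_{i=0}^{j} \frac{(k)_i(k)_i}{(2k)_i\, i!}\bigl(-4\sin(n\phi)\sin(n\theta)\bigr)^i\, C_{j-i}^{(k+i)}\bigl(\cos(n(\theta-\phi))\bigr),
\]
which rewrites the product of two Gegenbauer polynomials evaluated at $\cos(n\phi)$ and $\cos(n\theta)$ in terms of lower-order Gegenbauer polynomials of shifted parameter, evaluated at $\cos(n(\theta-\phi))$. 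I expect proving this identity to be the main obstacle. The natural tool is the Poisson kernel for Gegenbauer polynomials: one compares the series $\sum_{j\ge 0}\frac{j+k}{k}C_j^{(k)}(\cos(n\phi))C_j^{(k)}(\cos(n\theta))\,t^j$ to the kernel's closed form and matches coefficients of $t^j$. (A quick sanity check at $j=1$ gives $2k\cos(n\phi)\cos(n\theta)=2k\cos(n(\theta-\phi))-2k\sin(n\phi)\sin(n\theta)$, which is an identity.)

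Granted this identity, I substitute it into the definition of $S_N(n,k,\phi,\theta)$ and swap the summation over $i$ with the one over $(j,m)$ constrained by $2m+nj=N$. Setting $j':=j-i$, the constraint becomes $2m+nj'=N-ni$, and the inner sum becomes
\[
\sum_{\substack{j',m\ge 0\\ 2m+nj'=N-ni}}\frac{n(j'+k+i)}{m!\,\Gamma\bigl(n(j'+k+i)+m+1\bigr)}\,C_{j'}^{(k+i)}\bigl(\cos(n(\theta-\phi))\bigr),
\]
which is exactly $S_{N-ni}\bigl(n,k+i,0,\theta-\phi\bigr)$.

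At this point I apply \eqref{IdGeg} with the substitutions $k\mapsto k+i$, $\theta\mapsto\theta-\phi$ and $N\mapsto N-ni$. The key simplification is the cancellation $n(k+i)+(N-ni)=nk+N$, so the gamma factor in \eqref{IdGeg} becomes $\Gamma(nk+N)$, producing the multinomial sum
\[
S_{N-ni}\bigl(n,k+i,0,\theta-\phi\bigr) = \frac{2^{N-ni}}{\Gamma(nk+N)}\sum_{\substack{l_1,\dots,l_n\ge 0\\ \sum_s l_s=N-ni}}\prod_{s=1}^n\frac{(k+i)_{l_s}}{l_s!}\bigl(b^{\theta-\phi,n}_s\bigr)^{l_s}.
\]

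The remaining steps are cosmetic: combining the powers of two via $(-4)^i\cdot 2^{N-ni}=2^N\cdot(-2^{2-n})^i$, and re-indexing with $m_s:=l_s+i$ (so that $\sum_s l_s=N-ni$ transforms into $\sum_s m_s=N$, while $l_s\ge 0$ becomes $m_s\ge i$, i.e.\ $i\le\inf(m_1,\dots,m_n)$) yields exactly the double sum displayed in the theorem.
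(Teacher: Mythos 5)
Your argument is correct, but it is organized quite differently from the paper's proof. The paper never isolates a product formula for Gegenbauer polynomials: it forms the generating function $\sum_N S_N\,\Gamma(nk+N)z^N/2^N$, resums over $m$ into a ${}_2F_1$, applies the quadratic transformation ${}_2F_1(a,a+1/2,2a+1;z)=2^{2a}\bigl(1+\sqrt{1-z}\bigr)^{-2a}$ to recognize the Poisson kernel evaluated at $t=z^n\bigl(1+\sqrt{1-z^2}\bigr)^{-n}$, and then invokes the Chebyshev factorization $2z^nT_n(1/z)-2z^n\cos\bigl(n(\theta-\phi)\bigr)=2^n\prod_s\bigl(1-b_s^{\theta-\phi,n}z\bigr)$ and the binomial theorem before extracting the coefficient of $z^N$ --- in effect re-proving \eqref{IdGeg} with shifted parameters along the way. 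You instead extract from the very same Poisson kernel (by expanding the ${}_2F_1$ and then each factor $\bigl(1-2t\cos(n(\theta-\phi))+t^2\bigr)^{-(k+i)}$ via the Gegenbauer generating function, the one ingredient you leave implicit) the finite linearization
\begin{equation*}
\frac{C_j^{(k)}\bigl(\cos(n\phi)\bigr)C_j^{(k)}\bigl(\cos(n\theta)\bigr)}{C_j^{(k)}(1)}=\sum_{i=0}^{j}\frac{(k)_i(k)_i}{(2k)_i\,i!}\bigl(-4\sin(n\phi)\sin(n\theta)\bigr)^iC_{j-i}^{(k+i)}\bigl(\cos(n(\theta-\phi))\bigr),
\end{equation*}
which is a correct identity and reduces the theorem, term by term, to \eqref{IdGeg} with $k\mapsto k+i$, $N\mapsto N-ni$; the cancellation $n(k+i)+(N-ni)=nk+N$, the bookkeeping $(-4)^i2^{N-ni}=2^N(-2^{2-n})^i$, and the re-indexing $m_s=l_s+i$ all check out, and \eqref{IdGeg} extends harmlessly to the angle $\theta-\phi$ since both sides are symmetric in the $b_s$ and even in the angle. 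What your route buys is modularity: the boundary identity is used as a black box rather than re-derived, and no manipulation of $\sqrt{1-z^2}$ or quadratic hypergeometric transformations is needed; the cost is having to state and justify the linearization identity, whose proof you only sketch but which does follow exactly as you indicate.
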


\begin{proof}
Multiply $S_N(n,k, \phi, \theta) $ by the factor $\Gamma(nk +N)z^N/2^N$ for small enough $|z|$. Summing the resulting expression over $N \geq 0$ and using the duplication formula
\[
(x)_{2l}=2^{2l}\biggl(\frac{x}{2}\biggr)_{l}\biggl(\frac{1+x}{2}\biggr)_{l},
\]
we get the series 
\begin{align*}
\sum_{m, j \geq 0} \frac{n(j+k)\Gamma\bigl(n(j+k)\bigr)}{m!2^{nj} \Gamma\bigl(n(j+k) +m+ 1\bigr)} \bigl((nj+nk)/2\bigr)_m\bigl((nj+nk+1)/2\bigr)_m \frac{C_j^{(k)}\bigl(\cos(n\phi)\bigr)C_j^{(k)}\bigl(\cos(n\theta)\bigr)}{C_j^{(k)}(1)}z^{2m+nj}.
\end{align*}
Summing the latter over $m$, we further get
\begin{multline*}
\sum_{N \geq 0} S_N(n,k, \phi, \theta) \frac{\Gamma(nk +N)z^N}{2^N} = \\  \sum_{j \geq 0} \frac{z^{nj}}{2^{nj}} \frac{C_j^{(k)}\bigl(\cos(n\phi)\bigr)C_j^{(k)}\bigl(\cos(n\theta)\bigr)}{C_j^{(k)}(1)} 
{}_2F_1\left(\frac{nj+nk}{2}, \frac{nj+nk+1}{2}, nj + nk + 1; z^2\right),
\end{multline*}
where ${}_2F_1$ stands for the Gauss hypergeometric function.
Thanks to the following formula (see for instance \cite{Erd1}, p.101)
\begin{equation*}
{}_2F_1\left(\frac{nj+nk}{2}, \frac{nj+nk+1}{2}, nj + nk + 1; z\right) = \frac{2^{nk+nj}}{\bigl(1+\sqrt{1-z}\bigr)^{nk+nj}}, 
\end{equation*}
valid for $z \in \mathbb{C} \setminus [1,\infty[$, we are led to 
\begin{equation}\label{Poisson}
\frac{2^{nk}}{\bigl(1+\sqrt{1-z^2}\bigr)^{nk}} \sum_{j \geq 0} \frac{z^{nj}}{\bigl(1+\sqrt{1-z^2}\bigr)^{nj}} \frac{C_j^{(k)}\bigl(\cos(n\phi)\bigr)C_j^{(k)}\bigl(\cos(n\theta)\bigr)}{C_j^{(k)}(1)}.
\end{equation}
But the Poisson kernel expression (formula $(19)$ in \cite{Mai}, see also \cite{Dun})
\begin{multline*}
\sum_{j \geq 0}  \frac{C_j^{(k)}\bigl(\cos(n\phi)\bigr)C_j^{(k)}\bigl(\cos(n\theta)\bigr)}{C_j^{(k)}(1)}t^j =  \frac{1}{\Bigl(1-2t\cos\bigl(n(\theta-\phi)\bigr) + t^2\Bigr)^k}  {}_2F_1\left(k, k, 2k; \frac{-4t\sin(n\theta) \sin(n\phi)}{1-2t\cos\bigl(n(\theta-\phi)\bigr) + t^2}\right)
\end{multline*}
which converges absolutely for $|t| < 1$, shows that \eqref{Poisson} may be written after some simplifications as
\begin{multline}\label{Lastfor}
\frac{2^{nk}}{\Bigl(\bigl(1+\sqrt{1-z^2}\bigr)^n + \bigl(1-\sqrt{1-z^2}\bigr)^n -2z^n \cos\bigl(n(\theta-\phi)\bigr)\Bigr)^k}\times \\ {}_2F_1\left(k, k, 2k; \frac{-4z^n \sin(n\theta) \sin(n\phi)}{\bigl(1+\sqrt{1-z^2}\bigr)^n + \bigl(1-\sqrt{1-z^2}\bigr)^n -2z^n \cos\bigl(n(\theta-\phi)\bigr)^k}\right). 
\end{multline}
Expanding the hypergeometric function and using the identity (\cite{AAR})
\begin{align*}
 \bigl(1+\sqrt{1-z^2}\bigr)^n + \bigl(1-\sqrt{1-z^2}\bigr)^n = 2 \sum_{j=0}^{[n/2]} \binom{n}{2j}(1-z^2)^j = 2z^nT_n\biggl(\frac{1}{z}\biggr) ,
 \end{align*}
where $T_n$ is the well-known $n$-th Tchebycheff polynomial of the first kind, then formula \eqref{Lastfor} reads 
\begin{equation*}
2^{nk} \sum_{j \geq 0} \frac{(k)_j(k)_j}{(2k)_j j!} \frac{\bigl(-4z^n \sin(n\theta) \sin(n\phi)\bigr)^j}{\Bigl(2z^n T_n\left(1/z\right) - 2z^n \cos\bigl(n(\theta-\phi)\bigr)\Bigr)^{j+k}}.
\end{equation*}
Moreover, the following factorization holds (see \cite{Del-Dem})
\begin{equation*}
2z^n T_n\left(1/z\right) - 2z^n \cos\bigl(n(\theta-\phi)\bigr) = 2^n \prod_{s=1}^n \Bigl(1 - b^{\theta-\phi,n}_s z\Bigr)
\end{equation*}
whence we get the identity
\begin{multline*}
2^{nk} \sum_{j \geq 0} \frac{(k)_j(k)_j}{(2k)_j j!} \frac{\bigl(-4z^n \sin(n\theta) \sin(n\phi)\bigr)^j}{\Bigl(2z^n T_n\left(1/z\right) - 2z^n \cos\bigl(n(\theta-\phi)\bigr)\Bigr)^{j+k}} = \\
\sum_{j \geq 0} \frac{(k)_j(k)_j}{(2k)_j j!} \frac{\bigl(-z^n \sin(n\theta) \sin(n\phi)\bigr)^j}{2^{(n-2)j}} \prod_{s=1}^n \Bigl(1 - b^{\theta-\phi,n}_s z\Bigr)^{-(k+j)}.
\end{multline*}
Now, the generalized binomial theorem entails
\begin{align*}
z^{nj} \prod_{s=1}^n \Bigl(1 - b^{\theta-\phi,n}_s z\Bigr)^{-(k+j)} = \sum_{m_1, \dots, m_n \geq 0} z^{nj+m_1+\dots+m_n}\prod_{s=1}^n \frac{(k+j)_{m_s}}{m_s!}\bigl(b^{\theta-\phi,n}_s\bigr)^{m_s}
\end{align*}
so that 
\begin{multline*}
\sum_{j \geq 0} \frac{(k)_j(k)_j}{(2k)_j j!} \frac{\bigl(-z^n \sin(n\theta) \sin(n\phi)\bigr)^j}{2^{(n-2)j}} \prod_{s=1}^n\Bigl(1 - b^{\theta-\phi,n}_s z\Bigr)^{-(k+j)}= \\ 
\sum_{j, m_1, \dots, m_n \geq 0} \frac{(k)_j(k)_j}{(2k)_j j!}z^{(m_1+j) + \dots+ (m_n+j)} \frac{\bigl(-\sin(n\theta) \sin(n\phi)\bigr)^j}{2^{(n-2)j}}\prod_{s=1}^n \frac{(k+j)_{m_s}}{m_s!}\bigl(b^{\theta-\phi,n}_s\bigr)^{m_s}.
\end{multline*} 
Finally, we perform in the multiple series above the index changes $m_s+j \rightarrow m_s$, for fixed $j \geq 0$ and each $1\leq s \leq n$,   to obtain
\begin{multline*}
\sum_{j \geq 0} \frac{(k)_j(k)_j}{(2k)_j j!} \frac{\bigl(-z^n \sin(n\theta) \sin(n\phi)\bigr)^j}{2^{(n-2)j}} \prod_{s=1}^n \Bigl(1 - b^{\theta-\phi,n}_s z\Bigr)^{-(k+j)}= \\ 
\sum_{j \geq 0} \sum_{m_1, \dots, m_n \geq j } \frac{(k)_j(k)_j}{(2k)_j j!}z^{m_1 + \dots+ m_n} \frac{\bigl(-\sin(n\theta) \sin(n\phi)\bigr)^j}{2^{(n-2)j}}\prod_{s=1}^n \frac{(k+j)_{m_s-j}}{(m_s-j)!}\bigl(b^{\theta-\phi,n}_s\bigr)^{m_s-j},
\end{multline*} 
and we now change the summation order to end up with
\begin{multline*}
\sum_{N \geq 0} S_N(n,k, \phi, \theta) \frac{\Gamma(nk +N)z^N}{2^N} = 
 \\  \sum_{m_1, \dots, m_n \geq 0} z^{m_1 + \dots+ m_n} 
 \sum_{j=0}^{\inf(m_1,\dots, m_n)}\frac{(k)_j(k)_j}{(2k)_j j!} \Bigl(-2^{2-n}\sin(n\theta) \sin(n\phi)\Bigr)^j\prod_{s=1}^n \frac{(k+j)_{m_s-j}}{(m_s-j)!}\bigl(b^{\theta-\phi,n}_s\bigr)^{m_s-j}.
\end{multline*}
Comparing equal powers of $z$, we are done. 
\end{proof}


As a first corollary, we get the following expansion of the generalized Bessel function.
\begin{cor} \label{coco}
Let $\mathcal{D}_2(n), n \geq 3,$ be a dihedral group with a constant multiplicity function $k$. Then, the associated generalized Bessel function is given by
\begin{multline*}
D_k^{\mathcal{D}_2(n)}(x,y) =  \frac{c_{n,k}}{nB(k+1/2,1/2)\Gamma(nk)} \times \\ \sum_{j \geq 0}\frac{(k)_j(k)_j}{(2k)_j(nk)_{nj} j!} \Biggl(-4\biggl(\frac{\rho r}{2}\biggr)^n\sin(n\theta) \sin(n\phi)\Biggr)^j  \Phi_2^{(n)}\Bigl(k+j, \dots, k+j; nk+nj; \rho r b^{\theta-\phi,n}_1, \dots, \rho r b^{\theta-\phi,n}_n\Bigr).
\end{multline*}
\end{cor}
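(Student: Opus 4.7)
The plan is essentially to plug Theorem \ref{teooo} into the double-series form of $D_k^{\mathcal{D}_2(n)}$ already written down just before the theorem and rearrange the resulting quadruple sum into the Horn function $\Phi_2^{(n)}$. In symbols, the starting point is the identity
\[
D_k^{\mathcal{D}_2(n)}(x,y) \;=\; \frac{c_{n,k}}{nB(k+1/2,1/2)}\sum_{N\ge 0}\biggl(\frac{\rho r}{2}\biggr)^{N} S_N(n,k,\phi,\theta),
\]
obtained by absorbing the prefactor $(2/r\rho)^{\gamma}$ in \eqref{Eq1} into the Bessel expansion \eqref{Bessel} and sorting the double series by the homogeneity index $N=2m+nj$. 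Everything then reduces to computing the generating series $\sum_N (\rho r/2)^N S_N$.

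The next step is to substitute the formula for $S_N$ from Theorem \ref{teooo}. After the substitution, the factor $(\rho r/2)^N 2^N/\Gamma(nk+N)$ becomes $(\rho r)^N/\Gamma(nk+N)$, where $N=m_1+\cdots+m_n$. Because the series that produced $S_N$ converges absolutely (as noted in the paper using the bound $|C_j^{(k)}(z)|\le C_j^{(k)}(1)$), I may freely interchange the order of the $N$-sum, the $(m_1,\ldots,m_n)$-sum, and the $j$-sum. I push $j$ outward so that the remaining summation is over $m_1,\ldots,m_n\ge j$, and then I apply the shift $l_s:=m_s-j$ for $s=1,\ldots,n$. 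This converts $(\rho r)^N$ into $(\rho r)^{nj}(\rho r)^{l_1+\cdots+l_n}$ and frees the $l_s$ to range independently over $\mathbb N$.

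What remains inside is the sum
\[
\sum_{l_1,\dots,l_n\ge 0}\frac{(\rho r)^{l_1+\cdots+l_n}}{\Gamma\bigl(nk+nj+l_1+\cdots+l_n\bigr)}\prod_{s=1}^n\frac{(k+j)_{l_s}}{l_s!}\bigl(b^{\theta-\phi,n}_s\bigr)^{l_s}.
\]
Using $\Gamma(nk+nj+L)=\Gamma(nk)\,(nk)_{nj}\,(nk+nj)_{L}$, this is exactly
\[
\frac{1}{\Gamma(nk)(nk)_{nj}}\,\Phi_2^{(n)}\bigl(k+j,\dots,k+j;\,nk+nj;\,\rho r\,b^{\theta-\phi,n}_1,\dots,\rho r\,b^{\theta-\phi,n}_n\bigr),
\]
directly from the definition \eqref{Horn}. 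Finally, combining $(\rho r)^{nj}$ with the prefactor $(-2^{2-n}\sin(n\theta)\sin(n\phi))^j$ from Theorem \ref{teooo} yields $(-4(\rho r/2)^n\sin(n\theta)\sin(n\phi))^j$, which matches the statement.

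The entire argument is essentially bookkeeping once Theorem \ref{teooo} is in hand; the only non-cosmetic step is the rearrangement of summation order, and the main place where I would be careful is tracking the gamma-factor and the powers of $2$ so that the final coefficient comes out as $(k)_j(k)_j/[(2k)_j(nk)_{nj}j!]$ and the exponential factor as $(-4(\rho r/2)^n\sin(n\theta)\sin(n\phi))^j$. No genuine obstacle is expected.
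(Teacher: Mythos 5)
Your proposal is correct and follows essentially the same route as the paper: substitute Theorem \ref{teooo} into the generating series $\sum_{N}(\rho r/2)^{N}S_N$, interchange the sums, shift $m_s\mapsto m_s-j$, and split the Pochhammer symbol via $(nk)_{nj+L}=(nk)_{nj}(nk+nj)_L$ to recognize $\Phi_2^{(n)}$. The bookkeeping of the gamma factor and the powers of $2$ is handled correctly, so nothing further is needed.
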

\begin{proof}
By the very definition of the Pochhammer symbol, the result of the previous theorem can be rewritten as
\begin{multline*}
S_N(n,k, \phi, \theta) =\frac{2^N}{\Gamma(nk)} \times \\ \sum_{\substack{m_1, \dots, m_n \geq 0 \\ m_1+\dots +m_n = N}} \frac{1}{(nk)_N} 
 \sum_{j=0}^{\inf(m_1,\dots, m_n)}\frac{(k)_j(k)_j}{(2k)_j j!}\Bigl(-2^{2-n}\sin(n\theta) \sin(n\phi)\Bigr)^j \prod_{s=1}^n \frac{(k+j)_{m_s-j}}{(m_s-j)!}\bigl(b^{\theta-\phi,n}_s\bigr)^{m_s-j}.
\end{multline*}
Multiplying both sides by $(\rho r/2)^N$ and then summing over $N \geq 0$, it follows that 
\begin{multline*}
\sum_{N \geq 0} S_N(n,k, \phi, \theta) \biggl(\frac{\rho r}{2}\biggr)^N = \frac{1}{\Gamma(nk)} \sum_{m_1, \dots, m_n \geq 0}\frac{(\rho r)^{m_1+\dots+m_n}}{(nk)_{m_1+\dots+m_n}}  \\
\sum_{j=0}^{\inf(m_1,\dots, m_n)}\frac{(k)_j(k)_j}{(2k)_j j!}\Bigl(-2^{2-n}\sin(n\theta) \sin(n\phi)\Bigr)^j \prod_{s=1}^n \frac{(k+j)_{m_s-j}}{(m_s-j)!}  \bigl(b^{\theta-\phi,n}_s\bigr)^{m_s-j},
\end{multline*}
which is equivalent to
\begin{multline*}
\sum_{N \geq 0} S_N(n,k, \phi, \theta) \biggl(\frac{\rho r}{2}\biggr)^N  = \frac{1}{\Gamma(nk)} \sum_{j \geq 0}\frac{(k)_j(k)_j}{(2k)_j j!} \Bigl(-2^{2-n}\sin(n\theta) \sin(n\phi)\Bigr)^j\\   
\sum_{m_1, \dots, m_n \geq j} \frac{(\rho r)^{m_1+\dots+m_n}}{(nk)_{m_1+\dots+m_n}}\prod_{s=1}^n \frac{(k+j)_{m_s-j}}{(m_s-j)!} \bigl(b^{\theta-\phi,n}_s\bigr)^{m_s-j},
\end{multline*}
and then
\begin{multline*}
\sum_{N \geq 0} S_N(n,k, \phi, \theta) \biggl(\frac{\rho r}{2}\biggr)^N  = \frac{1}{\Gamma(nk)} \sum_{j \geq 0}\frac{(k)_j(k)_j}{(2k)_j(nk)_{nj} j!} \Biggl(-4\biggl(\frac{\rho r}{2}\biggr)^n\sin(n\theta) \sin(n\phi)\Biggr)^j \\ 
\sum_{m_1, \dots, m_n \geq 0} \frac{1}{(nk+nj)_{m_1+\dots+m_n}}\prod_{s=1}^n \frac{(k+j)_{m_s}}{(m_s)!} (\rho r)^{m_s}\bigl(b^{\theta-\phi,n}_s\bigr)^{m_s},
\end{multline*}
where we have used in particular the formula
\[
\frac{1}{(nk)_{m_1+\dots+m_n+nj}}=\frac{1}{(nk)_{nj}}\frac{1}{(nk+nj)_{m_1+\dots+m_n}}.
\]
Keeping in mind \eqref{Horn}, it follows that
\begin{multline*}
\sum_{N \geq 0} S_N(n,k, \phi, \theta) \biggl(\frac{\rho r}{2}\biggr)^N 
= \frac{1}{\Gamma(nk)} \sum_{j \geq 0}\frac{(k)_j(k)_j}{(2k)_j(nk)_{nj} j!} \Biggl(-4\biggl(\frac{\rho r}{2}\biggr)^n\sin(n\theta) \sin(n\phi)\Biggr)^j 
\\ \Phi_2^{(n)}\Bigl(k+j, \dots, k+j; nk+nj; \rho r b^{\theta-\phi,n}_1, \dots, \rho r b^{\theta-\phi,n}_n\Bigr),
\end{multline*}
which coincides with $D_k^{\mathcal{D}_2(n)}(x,y)$ up to the normalizing factor $\displaystyle{\frac{c_{n,k}}{nB(k+1/2,1/2)}}$. 
\end{proof}

We now give another corollary, which establishes an integral representation of the generalized Bessel function over the $(n-1)$-dimensional standard simplex, which will be denoted by
\begin{equation*}
\Sigma_n := \biggl\{\Bigl(u_1, \ldots, u_{n-1},\bigl(\underbrace{1-u_1-\cdots-u_{n-1}}_{:=u_0}\bigr)\Bigr) \in \mathbb R^n:\  \, u_0, u_1, \ldots, u_{n-1} \geq 0\biggr\}.
\end{equation*}
\begin{cor}Let $\mathcal{D}_2(n), n \geq 3,$ be a dihedral group with a constant multiplicity function $k$. Then, the generalized Bessel function admits the following integral representation 
\begin{multline*}
D_k^{\mathcal{D}_2(n)}(x,y) =  \frac{c_{n,k}}{nB(k+1/2,1/2)\bigr(\Gamma(k)\bigl)^n}
\int_{\Sigma_n}\Biggl(\exp\biggl(\rho r \Bigl(u_0\cos(\theta-\phi)+\sum_{s=1}^{n-1} u_s b^{\theta-\phi,n}_s\Bigr)\biggr)\times \\ {}_0F_{n-1}\Biggl(2k, k, \dots, k; -4\biggl(\frac{\rho r}{2}\biggr)^nu_0u_1\dots u_{n-1} \sin(n\theta) \sin(n\phi)\Biggr)u_0^{k-1}\prod_{s=1}^{n-1} u_s^{k-1}\Biggr) du_1 \ldots du_{n-1},
\end{multline*}
where  ${}_0F_{n-1}$ is the following hypergeometric function
\begin{equation*}
{}_0F_{n-1}(a_1, \dots, a_{n-1}; z) := \sum_{j \geq 0}\frac{1}{(a_1)_j \dots (a_{n-1})_j} \frac{z^j}{j!},  \quad a_1, \dots, a_{n-1} \in \mathbb{R} \setminus \left(-\mathbb{N}\right), z \in \mathbb{C}.
\end{equation*}
\end{cor}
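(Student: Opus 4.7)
The plan is to start from the series representation of $D_k^{\mathcal{D}_2(n)}(x,y)$ given by Corollary \ref{coco} and to feed each term $\Phi_2^{(n)}(k+j,\dots,k+j;nk+nj;\rho r b_1^{\theta-\phi,n},\dots,\rho r b_n^{\theta-\phi,n})$ into a Dirichlet-type integral representation of the confluent Horn function. The decisive point is that here all the upper parameters coincide, $\beta_s=k+j$, and sum exactly to the lower parameter $\gamma=nk+nj$, so $\Phi_2^{(n)}$ reduces to a Dirichlet-type integral
\begin{equation*}
\Phi_2^{(n)}(\beta_1,\dots,\beta_n;\beta_1+\cdots+\beta_n;z_1,\dots,z_n)
= \frac{\Gamma(\beta_1+\cdots+\beta_n)}{\prod_{s=1}^n \Gamma(\beta_s)}\int_{\Sigma_n} e^{\sum_{s=1}^n z_s u_s}\,u_0^{\beta_n-1}\prod_{s=1}^{n-1}u_s^{\beta_s-1}\,du_1\cdots du_{n-1},
\end{equation*}
where $u_n$ is identified with $u_0=1-u_1-\cdots-u_{n-1}$ and the convention $z_n=z$ corresponding to $u_0$ is used. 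This identity follows immediately by expanding $e^{\sum z_s u_s}$, integrating termwise via the classical Dirichlet formula $\int_{\Sigma_n}\prod u_s^{a_s-1}\,du=\prod\Gamma(a_s)/\Gamma(\sum a_s)$, and comparing with the definition \eqref{Horn} of $\Phi_2^{(n)}$.

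Substituting this representation into Corollary \ref{coco} with $\beta_s=k+j$ triggers two decisive simplifications. Since $\Gamma(nk+nj)=\Gamma(nk)(nk)_{nj}$, the Dirichlet prefactor $\Gamma(nk+nj)$ cancels both $\Gamma(nk)$ and $(nk)_{nj}$ appearing in Corollary \ref{coco} at once; and since $\Gamma(k+j)^n=\Gamma(k)^n(k)_j^n$, the remaining Pochhammer product collapses to $(k)_j^2/[(2k)_j(k)_j^n\, j!]=1/[(2k)_j(k)_j^{n-2}j!]$. I would then interchange the $j$-summation with the integral over $\Sigma_n$, a swap which is legitimate by absolute convergence since all series in sight are entire in their arguments while $\Sigma_n$ is compact, and factor the common weight $u_0^{k-1}\prod_{s=1}^{n-1}u_s^{k-1}$ out of each $u_s^{k+j-1}$, leaving behind an extra factor $(u_0 u_1\cdots u_{n-1})^j$ inside the sum.

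The residual $j$-series then reads
\begin{equation*}
\sum_{j\geq 0}\frac{1}{(2k)_j(k)_j^{n-2}\,j!}\Bigl(-4(\rho r/2)^n u_0 u_1\cdots u_{n-1}\sin(n\theta)\sin(n\phi)\Bigr)^j,
\end{equation*}
which, by the very definition recalled in the statement, is precisely ${}_0F_{n-1}(2k,k,\dots,k;\cdot)$ evaluated at $-4(\rho r/2)^n u_0 u_1\cdots u_{n-1}\sin(n\theta)\sin(n\phi)$. This yields the claimed integral representation. The only step that really requires care is establishing the Dirichlet-type integral for $\Phi_2^{(n)}$ in the equal-sum regime $\gamma=\sum\beta_s$ (the usual derivation via an auxiliary Beta factor degenerates there, and one must argue directly from the power series as above); everything else is careful bookkeeping of Pochhammer symbols and reindexing.
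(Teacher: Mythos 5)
Your proposal is correct and follows essentially the same route as the paper: both hinge on the Dirichlet integral over $\Sigma_n$ applied to the equal-parameter Horn function $\Phi_2^{(n)}(k+j,\dots,k+j;nk+nj;\cdot)$, followed by the cancellations $\Gamma(nk+nj)=\Gamma(nk)(nk)_{nj}$ and $\Gamma(k+j)^n=\Gamma(k)^n(k)_j^n$ and the identification of the residual $j$-series with ${}_0F_{n-1}$. The only (cosmetic) difference is that you package the Dirichlet step as a standalone integral formula for $\Phi_2^{(n)}$ verified by expanding the exponential, whereas the paper applies the Dirichlet formula coefficient-by-coefficient to the ratio $(k+j)_{m_1}\cdots(k+j)_{m_n}/(nk+nj)_{m_1+\cdots+m_n}$ --- the same computation read in the opposite direction.
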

\begin{proof}
Recall first the Dirichlet integral formula for the simplex: for any $\beta_1, \ldots, \beta_n > 0$, then we have
\begin{equation}\label{Dir}
\int_{\Sigma_n}u_0^{\beta_n-1}\prod_{s=1}^{n-1} u_s^{\beta_s-1} du_1 \ldots du_{n-1} = \frac{\Gamma(\beta_1) \ldots \Gamma(\beta_n)}{\Gamma(\beta_1+\cdots+\beta_n)}.
\end{equation}
Substituting $\beta_s = k+ j+m_s$ in \eqref{Dir} for each $1 \leq s \leq n$, we get
\begin{equation*}
\frac{(k+j)_{m_1}\dots (k+j)_{m_n}}{(nk+nj)_{m_1+\dots+m_n}} = \frac{\Gamma(nk+nj)}{\bigl(\Gamma(k+j)\bigr)^n} \int_{\Sigma_n}u_0^{k+j+m_n-1} \prod_{s=1}^{n-1} u_s^{k+j+m_s-1}du_1 \ldots du_{n-1},
\end{equation*} 
whence 
\begin{multline*}
\Phi_2^{(n)}\Bigl(k+j, \dots, k+j; nk+nj; \rho r b^{\theta-\phi,n}_1, \dots, \rho r b^{\theta-\phi,n}_n\Bigr) = \\ \frac{\Gamma(nk+nj)}{\bigl(\Gamma(k+j)\bigr)^n}\int_{\Sigma_n}\Biggl(\exp\biggl(\rho r \Bigl(u_0b^{\theta-\phi,n}_n+\sum_{s=1}^{n-1} u_s b^{\theta-\phi,n}_s\Bigr)\biggr)u_0^{k+j-1}\prod_{s=1}^n u_s^{k+j-1} \Biggr)du_1 \ldots du_{n-1}
\end{multline*}
which can be rewritten as
\begin{multline*}
\Phi_2^{(n)}\Bigl(k+j, \dots, k+j; nk+nj; \rho r b^{\theta-\phi,n}_1, \dots, \rho r b^{\theta-\phi,n}_n\Bigr)= 
 \\ \frac{(nk)_{nj}\Gamma(nk)}{\bigl((k)_j\bigr)^n\bigr(\Gamma(k)\bigl)^n} \int_{\Sigma_n}\Biggl(\exp\biggl(\rho r \Bigl(u_0b^{\theta-\phi,n}_n+\sum_{s=1}^{n-1} u_s b^{\theta-\phi,n}_s\Bigr)\biggr)u_0^{k+j-1}\prod_{s=1}^n u_s^{k+j-1} \Biggr)du_1 \ldots du_{n-1}.
\end{multline*}
If we use this last equality in the formula of Corollary \ref{coco} and since
\begin{multline*}
\sum_{j \geq 0}\frac{1}{(2k)_j\underbrace{(k)_j\dots (k)_j}_{n-2 \mathrm{\ times}} j!} \Biggl(-4\biggl(\frac{\rho r}{2}\biggr)^nu_0u_1\dots u_{n-1} \sin(n\theta) \sin(n\phi)\Biggr)^j = \\ {}_0F_{n-1}\Biggl(2k, k, \dots, k; -4\biggl(\frac{\rho r}{2}\biggr)^nu_0u_1\dots u_{n-1} \sin(n\theta) \sin(n\phi)\Biggr)
\end{multline*}
by the very definition of ${}_0F_{n-1}$, the corollary is proved.
\end{proof}

\section{Laplace-type integral representation of the generalized Bessel function for boundary angles} 
In this section, we derive a Laplace-type integral representation of the generalized Bessel function associated with even dihedral groups $\mathcal{D}_2(2p)$ and a constant multiplicity function when one of the variables, say $x$, lies on the boundary of the 
dihedral wedge. Remarkably, we shall see that the range of integration is $\Sigma_p$ and not $\Sigma_{2p}$ as expected.

 Due to the symmetry relation $C_j^{(k)}(-z) = (-1)^j C_j^{(k)}(z)$, it suffices to consider the value $\phi = 0$ for which the generalized Bessel function reduces to
\begin{align*}
D_k^{\mathcal{D}_2(n)}(x,y) & = \frac{c_{n,k}}{B(k+1/2,1/2)}\sum_{j,m \geq 0}\frac{j+k}{m!\Gamma\bigl(2p(j+k) +m+ 1\bigr)} C_j^{(k)}\bigl(\cos(2p\theta)\bigr) \biggl(\frac{\rho r}{2}\biggr)^{2m+2jp}
\\& = \frac{c_{n,k}}{nB(k+1/2,1/2)} \sum_{N \geq 0} \Biggl(\sum_{\substack{j,m \geq 0 \\N = jp+m}} \frac{n(j+k)}{m!\Gamma\bigl(2p(j+k) +m+ 1\bigr)} C_j^{(k)}\bigl(\cos(2p\theta)\bigr)\Biggr) \biggl(\frac{\rho r}{2}\biggr)^{2N}.
\end{align*}
The key ingredient is the following result which partially extends a previous one due to the second author valid for $p=2$ and without any assumption either on the multiplicity function or on the location of the  variables (\cite{Dem2}, section 3). It involves the following normalized modified Bessel function of the first kind
\begin{equation*}
i_{\nu}(v) :=\Gamma(\nu+1)\biggl(\frac{2}{v}\biggr)^\nu I_\nu(v)= \sum_{m\geq 0} \frac{1}{m!(\nu+1)_m}\biggl(\frac{v}{2}\biggr)^{2m}, \quad \nu, v \in \mathbb{R}.
\end{equation*}
\begin{teo}
Let $\mathcal{D}_2(2p), p \geq 1,$ be an even dihedral group with a constant multiplicity function. Assume $x= \rho \geq 0$. Then, for any $y = re^{i\theta} \in \overline{C}$,  we have 
\begin{multline*}
 D_k^{\mathcal{D}_2(2p)}(x,y) = \frac{c_{2p,k}\Gamma(pk)}{2pB(k+1/2,1/2)\Gamma(2pk)\bigl(\Gamma(k)\bigr)^p}\times \\ \int_{\Sigma_p}\Biggl(i_{pk-1/2}\Biggl(\frac{\rho}{\sqrt{2}}\sqrt{\bigl(y_1^2+y_2^2\bigr)+\bigl(y_1^2-y_2^2\bigr)u_0+ \bigl(y_1^2-y_2^2\bigr)\sum_{s=1}^{p-1} u_sA_s + 2y_1y_2 \sum_{s=1}^{p-1} u_sB_s}\Biggr)\\ u_0^{k-1}\prod_{s=1}^{p-1}  u_s^{k-1}\Biggr)du_1\dots du_{p-1},
\end{multline*}
where we have set
\begin{equation*}
A_s :=b_s^{0,p}=\cos\biggl(\frac{2s\pi}{p}\biggr), \quad  B_s :=-\sin\biggl(\frac{2s\pi}{p}\biggr), \quad s=1,\ldots,p-1.
\end{equation*}
\end{teo}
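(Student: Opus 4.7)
The plan is to refine the series expansion of $D_k^{\mathcal{D}_2(2p)}(\rho,y)$ recalled at the opening of this section by applying the identity \eqref{IdGeg}, and then to exploit the dihedral symmetry
$$b_{s+p}^{\theta,2p}=\cos(\theta+\pi+\pi s/p)=-b_s^{\theta,2p}$$
to halve the number of summation variables. Specifically, \eqref{IdGeg} evaluated at $2N$ gives
$$S_{2N}(2p,k,0,\theta)=\frac{2^{2N}}{\Gamma(2pk+2N)}\sum_{j_1+\cdots+j_{2p}=2N}\prod_{s=1}^{2p}\frac{(k)_{j_s}\bigl(b_s^{\theta,2p}\bigr)^{j_s}}{j_s!},$$
and pairing $s\in\{1,\ldots,p\}$ with $s+p$ reduces the inner convolution over each pair $(j_s,j_{s+p})$ to a Cauchy product: $\sum_{j+j'=l}\frac{(k)_j(k)_{j'}}{j!\,j'!}(-1)^{j'}=[z^l](1-z)^{-k}(1+z)^{-k}=[z^l](1-z^2)^{-k}$, which vanishes for odd $l$ and equals $(k)_m/m!$ when $l=2m$.

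Combining this collapse with the half-angle identity $\bigl(b_s^{\theta,2p}\bigr)^{2m}=2^{-m}(1+\cos\alpha_s)^{m}$, where $\alpha_s:=2\theta+2\pi s/p$ (after the cyclic relabeling $p\mapsto 0$), the $2p$-fold sum becomes
$$\sum_{j_1+\cdots+j_{2p}=2N}\prod_s\frac{(k)_{j_s}\bigl(b_s^{\theta,2p}\bigr)^{j_s}}{j_s!}=\frac{1}{2^N}\sum_{m_0+\cdots+m_{p-1}=N}\prod_{s=0}^{p-1}\frac{(k)_{m_s}}{m_s!}(1+\cos\alpha_s)^{m_s}.$$
To introduce the simplex $\Sigma_p$ I would combine the Dirichlet formula \eqref{Dir} with the binomial expansion $(1-\zeta)^{-k}=\sum_m(k)_m\zeta^m/m!$, which yields
$$\prod_{s=0}^{p-1}(1-\zeta_s)^{-k}=\frac{\Gamma(pk)}{\Gamma(k)^p}\int_{\Sigma_p}\Bigl(1-\sum_{s=0}^{p-1}u_s\zeta_s\Bigr)^{-pk}u_0^{k-1}\prod_{s=1}^{p-1}u_s^{k-1}\,du_1\ldots du_{p-1}.$$
Setting $\zeta_s=z(1+\cos\alpha_s)$ and extracting the coefficient of $z^N$ rewrites the finite sum above as
$$\frac{\Gamma(pk)(pk)_N}{\Gamma(k)^p\,N!}\int_{\Sigma_p}Y_u^N\,u_0^{k-1}\prod_{s=1}^{p-1}u_s^{k-1}\,du_1\ldots du_{p-1},\quad Y_u:=1+\sum_{s=0}^{p-1}u_s\cos\alpha_s.$$

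Plugging this back into the opening expansion of $D_k^{\mathcal{D}_2(2p)}(\rho,y)$ and exchanging summation with integration leaves a single series in $N$ whose summand, after Legendre duplication $\Gamma(2pk+2N)=2^{2pk+2N-1}\Gamma(pk+N)\Gamma(pk+N+1/2)/\sqrt\pi$, coincides with that of $(v/2)^{-(pk-1/2)}I_{pk-1/2}(v)=i_{pk-1/2}(v)/\Gamma(pk+1/2)$ evaluated at $v=\rho r\sqrt{Y_u/2}$. A trigonometric computation using $y_1^2-y_2^2=r^2\cos 2\theta$, $2y_1y_2=r^2\sin 2\theta$ and $\cos\alpha_s=\cos 2\theta\,A_s+\sin 2\theta\,B_s$ identifies $r^2Y_u$ with the quantity under the square root in the statement, while a second application of Legendre duplication (to $\Gamma(2pk)$) absorbs the remaining numerical factor. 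The main obstacle is the Cauchy-product collapse in the first paragraph: one must recognize that the pairing $s\leftrightarrow s+p$ converts the $2p$-fold multinomial into a convolution whose generating function simplifies to $(1-z^2)^{-k}$, producing a parity selection that is precisely what forces the integration to take place over $\Sigma_p$ rather than over $\Sigma_{2p}$, accounting for the halving of the dimension that is the distinguishing feature of the statement.
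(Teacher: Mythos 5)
Your proposal is correct and follows essentially the same route as the paper: specialize \eqref{IdGeg} at even index $2N$ with $n=2p$, pair $s$ with $s+p$ to collapse the $2p$-fold sum to a $p$-fold one via the convolution identity encoded in $(1-z)^{-k}(1+z)^{-k}=(1-z^2)^{-k}$, then combine the Dirichlet integral over $\Sigma_p$, the multinomial resummation and the Legendre duplication formula to recognize $i_{pk-1/2}$. The only difference is cosmetic: your Cauchy-product computation delivers the parity selection and the value $(k)_{m}/m!$ in one stroke, whereas the paper first establishes the vanishing of the odd-parity pairs by a cancellation argument and then verifies the even case by equating generating functions.
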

\begin{proof}
Recall formula \eqref{IdGeg} (this is Proposition 1 in \cite{Del-Dem}): for two integers  $n,M$ such that $n\geq 1, M \geq 0$  and for $\xi \in [0,\pi]$, we have
 \begin{equation}\label{IdGegb}
\sum_{\substack{m,j \geq 0 \\ M = 2m+nj}}\frac{n(j+k)}{m!\Gamma\bigl(n(j+k)+m+1\bigr)}C_j^{(k)}(\cos \xi) = \frac{2^{M}}{\Gamma(nk+M)}\sum_{\substack{j_1, \ldots, j_n \geq 0 \\ j_1+\cdots +j_n = M}}  
(k)_{j_1}\ldots (k)_{j_{n}} \frac{\bigl(b^{\xi/n,n}_1\bigr)^{j_1}}{j_1!}\cdots  \frac{\bigl(b^{\xi/n,n}_{n}\bigr)^{j_{n}}}{j_{n}!}.
\end{equation}
In particular, if $M = 2N, n = 2p, \xi = 2p\theta$, then \eqref{IdGegb} specializes to
\begin{align*}
\sum_{\substack{m,j \geq 0 \\ N = m+pj}}\frac{2p(j+k)}{m!\Gamma\bigl(2p(j+k)+m+1\bigr)}C_j^{(k)}\bigl(\cos(2p\theta)\bigr) = \frac{2^{2N}}{\Gamma(2N+2pk)} \sum_{\substack{j_1, \ldots, j_{2p} \geq 0 \\ j_1+\cdots +j_{2p} = 2N}}  
\prod_{s=1}^{2p} \frac{(k)_{j_s}}{j_s!} \bigl(b^{\theta,2p}_{s}\bigr)^{j_s}.
\end{align*}
Now, since $\cos(a+\pi) = -\cos(a)$, then the finite sum in the right-hand side of the last equality may be written as
\begin{equation}\label{Sum1}
\sum_{\substack{j_1, \ldots, j_{2p} \geq 0 \\ j_1+\cdots+ j_{2p} = 2N}} (-1)^{j_1+\dots+j_p} \prod_{s=1}^{2p} \frac{(k)_{j_s}}{j_s!}  \prod_{s=1}^p  \bigl(b^{\theta,2p}_{s}\bigr)^{j_s + j_{s+p}}, 
\end{equation}
which may be further simplified as follows. Consider a $2p$-tuple of integers in the sum \eqref{Sum1} arranged in $p$ pairs
\begin{equation*}
(j_1,j_{p+1}), \ldots, (j_p, j_{2p}), \quad \sum_{s=1}^{2p} j_s = 2N, 
\end{equation*}
and assume that one pair consists of an even and an odd integers. Then, the $2p$-tuple obtained from the previous one by permuting the even and the odd integers in the given pair (while keeping the other pairs unchanged) cancels the latter since the expression
\begin{equation*}
\prod_{s=1}^{2p} \frac{(k)_{j_s}}{j_s!}  \prod_{s=1}^p  \bigl(b^{\theta,2p}_{s}\bigr)^{j_s + j_{s+p}}
\end{equation*}
is the same for both tuples. As a matter of fact, the only tuples that remains in \eqref{Sum1} after cancellations are those for which $j_s+j_{s+p}$ is even, therefore 
\begin{multline}\label{Sum2}
\sum_{\substack{j_1, \ldots, j_{2p} \geq 0 \\ j_1+\cdots +j_{2p} = 2N}} (-1)^{j_1+\dots+j_p} \prod_{s=1}^{2p} \frac{(k)_{j_s}}{j_s!}  \prod_{s=1}^p  \bigl(b^{\theta,2p}_{s}\bigr)^{j_s + j_{s+p}} \\ = 
\sum_{\substack{j_1, \ldots, j_{2p} \geq 0 \\ j_1+\cdots +j_{2p} = 2N,\   j_s+j_{s+p} \textrm{\ is even}}}  (-1)^{j_1+\dots+j_p}  \prod_{s=1}^{2p} \frac{(k)_{j_s}}{j_s!}  \prod_{s=1}^p  \bigl(b^{\theta,2p}_{s}\bigr)^{j_s + j_{s+p}}.
\end{multline}
Setting $2m_s = j_s+j_{s+p}$ and using the identity 
\begin{equation*}
\sum_{j_s = 0}^{2m_s} (-1)^{j_s} \frac{(k)_{j_s}(k)_{2m_s-j_s}}{j_s!(2m_s - j_s)!} = \frac{(k)_{m_s}}{m_s!},
\end{equation*}
which is readily checked by equating generating functions of both sides, then the right-hand side of \eqref{Sum2} may be written as 
\begin{align*}
\sum_{\substack{m_1, \ldots, m_{p} \geq 0 \\ m_1+\cdots +m_{p} = N}} \prod_{s=1}^p  \bigl(b^{\theta,2p}_{s}\bigr)^{2m_s}  \sum_{j_s = 0}^{2m_s} (-1)^{j_s} \frac{(k)_{j_s}(k)_{2m_s-j_s}}{j_s!(2m_s - j_s)!} = 
\sum_{\substack{m_1, \ldots, m_{p} \geq 0 \\ m_1+\cdots +m_{p} = N}} \prod_{s=1}^p  \bigl(b^{\theta,2p}_{s}\bigr)^{2m_s} \frac{(k)_{m_s}}{m_s!}.
\end{align*}
Consequently, 
\begin{equation*}
\sum_{N = jp+m} \frac{2p(j+k)}{m!\Gamma\bigl(2p(j+k) +m+ 1\bigr)} C_j^{(k)}\bigl(\cos(2p\theta)\bigr) =  \frac{2^{2N}}{\Gamma(2N+2pk)} 
\sum_{\substack{m_1, \ldots, m_{p} \geq 0 \\ m_1+\cdots +m_{p} = N}} \prod_{s=1}^p \bigl(b^{\theta,2p}_{s}\bigr)^{2m_s} \frac{(k)_{m_s}}{m_s!},
\end{equation*}
whence 
\begin{align*}
D_k^{\mathcal{D}_2(2p)}(x,y) = \frac{c_{2p,k}}{2pB(k+1/2,1/2)} \sum_{N \geq 0} \frac{1}{\Gamma(2N+2pk)} 
\sum_{\substack{m_1, \ldots, m_{p} \geq 0 \\ m_1+\cdots +m_{p} = N}}(\rho r)^{2N} \prod_{s=1}^p \bigl(b^{\theta,2p}_{s}\bigr)^{2m_s} \frac{(k)_{m_s}}{m_s!}.
\end{align*}
Thanks to the duplication formula, we derive:
\begin{multline*}
D_k^{\mathcal{D}_2(2p)}(x,y) = \frac{c_{2p,k}}{2pB(k+1/2,1/2)\Gamma(2pk)}\times \\   \sum_{m_1, \dots, m_p \geq 0} \frac{1}{(pk)_{m_1+\dots+m_p} (pk+1/2)_{m_1+\dots + m_p}} 
 \prod_{s=1}^p \bigl(b^{\theta,2p}_{s}\bigr)^{2m_s} \frac{(k)_{m_s}}{m_s!}\biggl(\frac{\rho r}{2}\biggr)^{2m_s},
 \end{multline*}
 or equivalently
 \begin{multline*}
D_k^{\mathcal{D}_2(2p)}(x,y) = \frac{c_{2p,k}\Gamma(pk)}{2pB(k+1/2,1/2)\Gamma(2pk)\bigl(\Gamma(k)\bigr)^p} \times \\  \sum_{m_1, \dots, m_p \geq 0} \frac{\Gamma(k+m_1) \ldots \Gamma(k+m_p)}{\Gamma\bigl((k+m_1)+\cdots+(k+m_p)\bigr)} \frac{1}{(pk+1/2)_{m_1+\dots + m_p}} 
 \prod_{s=1}^p \frac{\bigl(b^{\theta,2p}_{s}\bigr)^{2m_s}}{m_s!}\biggl(\frac{\rho r}{2}\biggr)^{2m_s}.
 \end{multline*}
Moreover, formula \eqref{Dir} entails
\begin{equation*}
\frac{\Gamma(k+m_1) \ldots \Gamma(k+m_p)}{\Gamma\bigl((k+m_1)+\cdots+(k+m_p)\bigr)}=\int_{\Sigma_p}u_0^{k+m_p-1}\prod_{s=1}^{p-1} u_s^{k+m_s-1} du_1 \ldots du_{p-1},\end{equation*}
and as such,
\begin{multline*}
D_k^{\mathcal{D}_2(2p)}(x,y) =  \frac{c_{2p,k}\Gamma(pk)}{2pB(k+1/2,1/2)\Gamma(2pk)\bigl(\Gamma(k)\bigr)^p}\times\\ \int_{\Sigma_p}\Biggl( \sum_{m_1, \dots, m_p \geq 0} \frac{1}{(pk+1/2)_{m_1+\dots + m_p}} 
\Biggl(\prod_{s=1}^p \frac{\bigl(b^{\theta,2p}_{s}\bigr)^{2m_s}}{m_s!}\biggl(\frac{\rho r}{2}\biggr)^{2m_s} \Biggr)u_0^{k+m_p-1}\prod_{s=1}^{p-1}u_s^{k+m_s-1} \Biggr) du_1\ldots du_{p-1}.
 \end{multline*}
By applying the multinomial theorem
 \begin{equation*}
(a_1+\cdots +a_p)^N = \sum_{m_1+\cdots + m_p = N} \frac{N!}{m_1!\ldots m_p!} \prod_{s=1}^p a_s^{m_s},
 \end{equation*}
 we get, leaving behind the notation $b^{\theta,2p}_{s}$,
 \begin{multline*}
D_k^{\mathcal{D}_2(2p)}(x,y) =
\frac{c_{2p,k}\Gamma(pk)}{2pB(k+1/2,1/2)\Gamma(2pk)\bigl(\Gamma(k)\bigr)^p} \times \\ \int_{\Sigma_p}\Biggl( \sum_{N \geq 0} \frac{1}{N! (pk+1/2)_{N}} \biggl(\frac{\rho r}{2}\biggr)^{2N} \Biggl(u_0\cos^{2}(\theta)+\sum_{s=1}^{p-1}  u_s \cos^{2}\biggl(\theta + \frac{s\pi}{p}\biggr)\Biggr)^N u_0^{k-1}\prod_{s=1}^{p-1}u_s^{k-1}\Biggr) du_1\dots du_{p-1},
\end{multline*}
that is to say, thanks to the very definition of $i_\nu$,
\begin{multline*}
D_k^{\mathcal{D}_2(2p)}(x,y) =
\frac{c_{2p,k}\Gamma(pk)}{2pB(k+1/2,1/2)\Gamma(2pk)\bigl(\Gamma(k)\bigr)^p} \times \\ \int_{\Sigma_p}\Biggl(i_{pk-1/2}\Biggl(\rho r\sqrt{u_0\cos^{2}(\theta)+\sum_{s=1}^{p-1}  u_s \cos^{2}\biggl(\theta + \frac{s\pi}{p}}\biggr)\Biggr)u_0^{k-1}\prod_{s=1}^{p-1}  u_s^{k-1}\Biggr) du_1\dots du_{p-1}.
\end{multline*}
Finally, the trigonometric formulas 
\begin{equation*}
\cos^2(\theta) = \frac{1+\cos(2\theta)}{2}, \quad \cos\biggl(2\theta + \frac{2s\pi}{p}\biggr) = A_s \cos(2\theta) + B_s \sin(2\theta), 
\end{equation*} 
together with the fact that $u_1+\dots + u_{p-1} = 1-u_0$ yield
\begin{multline*}
D_k^{\mathcal{D}_2(2p)}(x,y) = \frac{c_{2p,k}\Gamma(pk)}{2pB(k+1/2,1/2)\Gamma(2pk)\bigl(\Gamma(k)\bigr)^p}\times  \\ \int_{\Sigma_p}\Biggl( i_{pk-1/2}\Biggl(\frac{\rho r}{\sqrt{2}}\sqrt{1+u_0\cos(2\theta)+\cos(2\theta) \sum_{s=1}^{p-1 }u_sA_s + \sin(2\theta) \sum_{s=1}^{p-1} u_sB_s}\Biggr)u_0^{k-1}\prod_{s=1}^{p-1}  u_s^{k-1}\Biggr) du_1\dots du_{p-1},
\end{multline*}
or in cartesian coordinates
\begin{multline*}
 D_k^{\mathcal{D}_2(2p)}(x,y) = \frac{c_{2p,k}\Gamma(pk)}{2pB(k+1/2,1/2)\Gamma(2pk)\bigl(\Gamma(k)\bigr)^p}\times \\ \int_{\Sigma_p}\Biggl(i_{pk-1/2}\Biggl(\frac{\rho}{\sqrt{2}}\sqrt{\bigl(y_1^2+y_2^2\bigr)+\bigl(y_1^2-y_2^2\bigr)u_0+ \bigl(y_1^2-y_2^2\bigr)\sum_{s=1}^{p-1} u_sA_s + 2y_1y_2 \sum_{s=1}^{p-1} u_sB_s}\Biggr)\\ u_0^{k-1}\prod_{s=1}^{p-1}  u_s^{k-1}\Biggr)du_1\dots du_{p-1}.
\end{multline*}
\end{proof}

We are now ready to prove  a Laplace-type integral representation for the generalized Bessel function. Before stating it, we introduce, for $u=(u_1,\ldots,u_{p-1},u_0) \in \Sigma_p$,
\begin{equation*}
a := a(u) = \sqrt{1+u_0+ \sum_{s=1}^{p-1} u_sA_s}, \qquad b := b(u) = \frac{1}{a} \sum_{s=1}^{p-1} u_sB_s, 
\end{equation*}
\begin{equation*}
c := c(u) = \frac{1}{a} \sqrt{1- \Biggl(u_0+\sum_{s=1}^{p-1} u_sA_s\Biggr)^2 - \Biggl(\sum_{s=1}^{p-1} u_sB_s\Biggr)^2}.
\end{equation*}
Note that 
\begin{equation*}
\Biggl(u_0+\sum_{s=1}^{p-1} u_sA_s\Biggr)^2 + \Biggl(\sum_{s=1}^{p-1} u_sB_s\Biggr)^2 = \left|\sum_{s=0}^{p-1} u_se^{2i\pi s/p} \right|^2 
\end{equation*}
so that the square-root is well-defined. Note also that $a$ may vanish (for instance when $p$ is even) on a codimension-one subspace of $\Sigma_p$ and that the expressions involved in the integral representation below may be rewritten in a way that they are well-defined for all $u \in \Sigma_p$ (see the first remark after the proof).
\begin{cor}\label{CorLap}
Let $\mathcal{D}_2(2p), p \geq 1,$ be an even dihedral group with a constant multiplicity function which further satisfies $pk > 1/2$. Assume $x= \rho \geq 0$. Then, for any $y = re^{i\theta} \in \overline{C}$, we have
\begin{equation*}
D_k^{\mathcal{D}_2(2p)}(x, y) = \int_{\mathbb R^2} \mathrm{e}^{\langle y, z\rangle} H_p(\rho,  z) dz,
\end{equation*}
where we have set
\begin{multline*}
H_p(\rho,z) := \frac{c_{2p,k}\Gamma(pk)}{2pB(k+1/2,1/2)\Gamma(2pk)\bigl(\Gamma(k)\bigr)^p}\times  \\ \int_{E_{z,\rho, p}} (a(u)c(u)) \biggl(\frac{2}{\rho^2a^2(u)c^2(u)}\biggr)^{pk - (1/2)} \biggl(\frac{\rho^2a^2(u)c^2(u)}{2} - (c(u)z_1)^2 - (a(u)z_2-b(u)z_1)^2\biggr)^{pk-(3/2)} \\ u_0^{k-1} \prod_{s=1}^{p-1}  u_s^{k-1} du_1\ldots du_{p-1},
\end{multline*}
and, for any $z \in \mathbb{R}^2$, 
\begin{equation*}
E_{z,\rho, p} := \biggl\{u \in \Sigma_p:\  \frac{\rho^2a^2(u)c^2(u)}{2} > (c(u)z_1)^2 + (a(u)z_2-b(u)z_1)^2\biggr\}. 
\end{equation*}
 \end{cor}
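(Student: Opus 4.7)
The plan is to convert the integral representation of the preceding theorem into a Laplace-type integral by using a two-dimensional Poisson integral representation of $i_\nu$ together with a linear change of variables. The scalar $|W|$ that will appear as the argument of $i_{pk-1/2}$ becomes, after the substitution, a genuine exponential $e^{\langle y,z\rangle}$, and Fubini's theorem then allows one to swap the order of integration to isolate the density $H_p(\rho,z)$.

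First I would rewrite the expression under the square root of the preceding theorem in a more convenient form. From the definitions of $a,b,c$, one has $a^2 = 1+u_0+\sum_{s=1}^{p-1}u_sA_s$, $ab = \sum_{s=1}^{p-1}u_sB_s$ and $a^2c^2 = 1-\bigl(u_0+\sum_{s=1}^{p-1} u_sA_s\bigr)^2-\bigl(\sum_{s=1}^{p-1} u_sB_s\bigr)^2$, which together yield the key identity $2-a^2 = b^2+c^2$. A direct expansion then gives
\begin{equation*}
\bigl(y_1^2+y_2^2\bigr)+\bigl(y_1^2-y_2^2\bigr)u_0+\bigl(y_1^2-y_2^2\bigr)\sum_{s=1}^{p-1}u_sA_s+2y_1y_2\sum_{s=1}^{p-1}u_sB_s = (a y_1+b y_2)^2+(c y_2)^2,
\end{equation*}
so the argument of $i_{pk-1/2}$ in the theorem is exactly $|W|$ with $W:=\tfrac{\rho}{\sqrt{2}}(a y_1+b y_2,\, c y_2)$.

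Next I would apply the two-dimensional Laplace-type representation
\begin{equation*}
i_\nu(|W|)=\frac{\nu}{\pi}\int_{\{|\xi|<1\}} e^{\langle W,\xi\rangle}(1-|\xi|^2)^{\nu-1}\,d\xi,\qquad \nu>0,
\end{equation*}
which follows readily by passing to polar coordinates and invoking the series expansion of $I_0$. Specialising to $\nu=pk-1/2$ (whence the hypothesis $pk>1/2$) and carrying out the linear substitution $\xi\mapsto z:=\tfrac{\rho}{\sqrt{2}}(a\xi_1,\, b\xi_1+c\xi_2)$ yields $\langle W,\xi\rangle=\langle y,z\rangle$, Jacobian $\rho^2 ac/2$, and
\begin{equation*}
1-|\xi|^2 = \frac{2}{\rho^2 a^2 c^2}\biggl(\frac{\rho^2 a^2 c^2}{2}-(cz_1)^2-(az_2-bz_1)^2\biggr).
\end{equation*}
For each $u\in\Sigma_p$ the image of the open disc $|\xi|<1$ is an ellipse in $z$-space; dually, for each $z\in\mathbb R^2$, the set of $u\in\Sigma_p$ whose associated ellipse contains $z$ is precisely $E_{z,\rho,p}$.

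Finally, Fubini's theorem lets me interchange the integrations over $u$ and $z$, which restricts the $u$-integration to $E_{z,\rho,p}$ for each fixed $z$. Collecting the factors $ac$ (from the Jacobian) and $(2/(\rho^2 a^2 c^2))^{pk-1/2}$ (from the inverse power), together with the power $pk-3/2$ of the quadratic expression, reproduces the integrand in the definition of $H_p(\rho,z)$. The main technical point is justifying the change of variables on the subset of $\Sigma_p$ where $ac$ vanishes (codimension one, as noted in the remark preceding the statement); since that set is Lebesgue negligible and the hypothesis $pk>1/2$ ensures absolute integrability throughout, both the substitution and Fubini go through without further complication.
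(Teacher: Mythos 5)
Your proposal is correct and follows essentially the same route as the paper: the same two-dimensional Poisson-type representation of $i_{pk-1/2}$ over the unit disc with exponent $pk-3/2$, the same identity rewriting the radicand as $(a y_1+b y_2)^2+(c y_2)^2$, the same linear change of variables producing the factor $a c\,\bigl(2/(\rho^2a^2c^2)\bigr)^{pk-1/2}$, and Fubini to pass from the ellipse $V_{u,\rho,p}$ to the dual set $E_{z,\rho,p}$. The only divergence is that you include the normalisation $\frac{\nu}{\pi}$ in front of the disc integral (which is the correct constant, since $i_\nu(0)=1$), whereas the paper's cited formula omits it; carried through, your version would put an extra factor $(pk-1/2)/\pi$ into $H_p$ relative to the corollary as stated.
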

 \begin{rem}
We conjecture that $H_p(\rho,\cdot)$ is supported in the convex hull of 
\begin{equation*}
\mathcal{D}_2(2p) \rho = \{\rho \mathrm{e}^{is\pi/p}, \, 1 \leq s \leq 2p\}.
\end{equation*}
This conjecture was proved in \cite{Amr-Dem} for $p=2$. More generally, we can prove only the first half of this conjecture. More precisely, recall from Lemma 3.3. in \cite{Kos} that the convex hull of $\mathcal{D}_2(2p) \rho$ is the set 
\begin{equation*}
z_C - \rho \in \mathbb{R}_+\alpha_1 + \mathbb{R}_+\alpha_2,
\end{equation*}
where we recall the simple root vectors $\alpha_1 = i, \alpha_2 = -i\mathrm{e}^{i\pi/n}$, and $z_C$ is the unique representative of $z$ in the closed Weyl chamber $\overline{C}$. Equivalently, this set is characterized by the following inequalities
\begin{eqnarray*} 
z_{C,1} & \leq & \rho, \\ 
 z_{C,1}\cos\biggl(\frac{\pi}{2p}\biggr) + z_{C,2}\sin\biggl(\frac{\pi}{2p}\biggr) & \leq & \rho \cos\biggl(\frac{\pi}{2p}\biggr).  
 \end{eqnarray*}
The first inequality can be proved as follows: if $z$ is such that $E_{z,\rho, p} \neq \emptyset$, then it satisfies   
\begin{equation*}
c^2(u) z_1^2 < \frac{\rho^2a^2(u)c^2(u)}{2} \quad \Leftrightarrow \quad z_1^2 < \frac{\rho^2}{2} \Biggl(1+u_0+ \sum_{s=1}^{p-1} u_sA_s\Biggr).
\end{equation*}
By writing 
\begin{equation*}
(c(u)z_1)^2 + (a(u)z_2-b(u)z_1)^2 = (b^2(u)+c^2(u))\biggl(z_1 - \frac{a(u)b(u)}{b^2(u)+c^2(u)} z_2\biggr)^2  + \frac{a^2(u)c^2(u)}{b^2(u)+c^2(u)} z_2^2 \geq \frac{a^2(u)c^2(u)}{b^2(u)+c^2(u)} z_2^2,
\end{equation*}
we see that 
\begin{equation*}
z_2^2 \leq \frac{\rho^2}{2} \biggl(1-u_0- \sum_{s=1}^{p-1} u_sA_s\biggr).
\end{equation*}
Consequently, $|z|^2 \leq \rho^2$ and, since reflection groups consist of isometries,  $|z_C| = |z|$, so that $z_{C,1} \leq \rho$. 
\end{rem}

We now prove Corollary \ref{CorLap}.
 
\begin{proof}
The derivation of the integral representation is similar to that of Theorem 1 in \cite{Amr-Dem}. More precisely, recall first from \cite{Amr-Dem} the following integral representation: if $pk > 1/2$, then for any $w \in \mathbb{R}^2$,
\begin{equation*}
i_{pk-1/2}\bigl(|w|\bigr) = \int_{|z| < 1} \mathrm{e}^{\langle w, z\rangle}\bigl(1-|z|^2\bigr)^{pk - 3/2} dz,
\end{equation*}
where $|w| = \sqrt{w_1^2+w_2^2}$ is the Euclidean norm. Next, straightforward computations show that
\begin{equation*}
\sqrt{\bigl(y_1^2+y_2^2\bigr)+\bigl(y_1^2-y_2^2\bigr)u_0+ \bigl(y_1^2-y_2^2\bigr)\sum_{s=1}^{p-1} u_sA_s + 2y_1y_2 \sum_{s=1}^{p-1} u_sB_s} = \sqrt{(a(u)y_1+b(u)y_2)^2 + (c(u)y_2)^2}.
\end{equation*}
Consequently, 
\begin{multline*}
i_{pk-1/2}\Biggl(\frac{\rho}{\sqrt{2}}\sqrt{\bigl(y_1^2+y_2^2\bigr)+\bigl(y_1^2-y_2^2\bigr)u_0+ \bigl(y_1^2-y_2^2\bigr)\sum_{s=1}^{p-1} u_sA_s + 2y_1y_2 \sum_{s=1}^{p-1} u_sB_s}\Biggr) = \\ \int_{|z| < 1} \mathrm{e}^{\rho\bigl(a(u)y_1z_1 +y_2(b(u)z_1 + c(u)z_2)\bigr)/\sqrt{2}}\bigl(1-|z|^2\bigr)^{pk - 3/2} dz,
\end{multline*}
which can be rewritten as
\begin{multline*}
i_{pk-1/2}\Biggl(\frac{\rho}{\sqrt{2}}\sqrt{\bigl(y_1^2+y_2^2\bigr)+\bigl(y_1^2-y_2^2\bigr)u_0+ \bigl(y_1^2-y_2^2\bigr)\sum_{s=1}^{p-1} u_sA_s + 2y_1y_2 \sum_{s=1}^{p-1} u_sB_s}\Biggr)  = \\ (a(u)c(u)) \biggl(\frac{2}{\rho^2a^2(u)c^2(u)}\biggr)^{pk - (1/2)} 
\int_{V_{u,\rho, p}} \mathrm{e}^{\langle y, z \rangle} \biggl(\frac{\rho^2a^2(u)c^2(u)}{2} - (c(u)z_1)^2 - (a(u)z_2-b(u)z_1)^2\biggr)^{pk-(3/2)} dz, 
\end{multline*}
where, for each $u \in \Sigma_p$, the set $V_{u,\rho, p}$ is defined by
\begin{equation*}
V_{u,\rho, p} := \biggl\{z \in \mathbb{R}^2, \frac{\rho^2a(u)^2c(u)^2}{2} > (c(u)z_1)^2 + (a(u)z_2-b(u)z_1)^2\biggr\}.
\end{equation*}
Using Fubini Theorem, the sought integral representation follows. 
\end{proof}

\begin{rem}
Using the so-called shift principle, we can derive from Corollary \ref{CorLap} the Laplace-type integral representation of the dihedral Dunkl kernel along the same lines presented in \cite{Amr-Dem} for the dihedral group of order eight. Besides, it would be interesting to exploit the integral representations proved in this paper in order to derive asymptotic results for the generalized Bessel function and its behaviour near the edges of the dihedral Weyl chamber.  
\end{rem}


\begin{thebibliography}{99}
\bibitem{Amr}\emph{B. Amri}. Note on Bessel functions of type $A_{N-1}$. {\it Integ. Transf. Spec. Funct}. {\bf 25} (2014), no. 4, 448-461.
\bibitem{Amr-Dem}\emph{B. Amri, N. Demni}. Laplace-type integral representations of the generalized Bessel function and the Dunkl kernel of type B2. {\it Moscow Math. J}. {\bf 17} (2017), no. 2, 1-15.
\bibitem{AAR}\emph{G. E. Andrews, R. Askey, R. Roy}. Special functions. {\it Cambridge University Press}. 1999.
\bibitem{BKO}\emph{S. Ben Said, T. Kobayashi, B. Orsted}. Laguerre semigroup and Dunkl operators. {\it Compos. Math}. {\bf 148} (2012), no. 4, 1265-1336.
\bibitem{BBO} \emph{P. Biane, P. Bougerol, N. O' Connell}. Continuous crystal and Duistermaat-Heckman measure for Coxeter groups. Adv. Math.  {\bf 221} (2009), 1522-1583. 
\bibitem{CDBL}\emph{D. Constales, H. De Bie, P. Lian}. Explicit formulas for the Dunkl dihedral kenel and the $(\kappa, a)$-generalized Fourier kernel. {\it J. Math. Anal. Appl}. {\bf 460} (2018), no. 2, 900-926. 
\bibitem{DDY}\emph{L. Deleaval, N. Demni, H. Youssfi}. Dunkl kernel associated with dihedral groups. {\it J. Math. Anal. Appl}. {\bf 432} (2015), no. 2, 928-944.
\bibitem{Del-Dem}\emph{L. Deleaval, N. Demni}. On a Neumann-type series of modified Bessel functions. {\it Proc. Amer. Math. Soc}. {\bf 146} (2018), no. 5, 2149-2161. 
\bibitem{Dem0}\emph{N. Demni}. Radial Dunkl processes associated with dihedral systems. {\it S\'eminaire de Probabilit\'es}. {\bf XLII}, 2009, 153-169.
\bibitem{Dem2}\emph{N. Demni}. Generalized Bessel function associated with dihedral groups. {\it J. Lie Theory}. {\bf 22} (2012), 81-91. 
\bibitem{Dun}\emph{C. F. Dunkl}. Poisson and Cauchy kernels for orthogonal polynomials with dihedral symmetry. {\it J. Math. Anal. Appl.} {\bf 143} (1989), 459-470.
\bibitem{Dun-Xu}\emph{C. F. Dunkl, Y. Xu}. Orthogonal Polynomials of Several Variables. {\it Encyclopedia of Mathematics and Its Applications. Cambridge University Press}. 2001.
\bibitem{Erd1}\emph{A. Erdelyi, W. Magnus, F. Oberhettinger, F. G. Tricomi}. Higher Transcendental Functions. {\bf Vol. I.} McGraw-Hill Book Company, Inc., New York-Toronto-London, 1954.
\bibitem{Kar-Sri}\emph{P. W. Karlsson, H. M. Srivastava}. Multiple Gaussian Hypergeometric Series. {\it Ellis Horwood Series: Mathematics and its Applications. Ellis Horwood Ltd., Chichester; Halsted Press, New York}, 1985. 425 pp.
\bibitem{Kob-Man}\emph{T. Kobayashi, G. Mano}. The inversion formula and holomorphic extension of the minimal representation of the conformal group. {\it Harmonic analysis, group representations, automorphic forms and invariant theory}, 151-208, 
Lect. Notes Ser. Inst. Math. Sci. Natl. Univ. Singap., 12, World Sci. Publ., Hackensack, NJ, 2007. 
\bibitem{Kos}\emph{B. Kostant}. On convexity, the Weyl group and the Iwasawa decomposition. {\it Ann. Sci. Ecole Norm. Sup.} {\bf Tome 6} (1973), no. 4, 413-455.
\bibitem{Mai}\emph{R. S. Maier}. Algebraic generating functions for Gegenbauer polynomials. {\it Frontiers in orthogonal polynomials and q-series}, 425-444, {\it Contemp. Math. Appl. Monogr. Expo. Lect. Notes}, 1, World Sci. Publ., Hackensack, NJ, 2018.
\bibitem{Xu}\emph{Y. Xu}. Intertwining operators associated to dihedral groups. {\it To appear in Constr. Approx.}. 
\end{thebibliography}
\end{document}